\newcounter{nalg}[section] 
\renewcommand{\thenalg}{\thesection .\arabic{nalg}} 
\numberwithin{equation}{section}
\theoremstyle{definition}
\renewcommand{\thenalg}{\thesection .\arabic{nalg}} 
\definecolor{mygreen}{rgb}{0,0.6,0}
\definecolor{mygray}{rgb}{0.5,0.5,0.5}
\definecolor{mymauve}{rgb}{0.58,0,0.82}
\tiny\color{mygray}, 
\numberwithin{equation}{section}
\newtheorem{lem}[equation]{Lemma}
\newtheorem{theorem}[equation]{Theorem}
\newtheorem{definition}[equation]{Definition}
\newtheorem{prop}[equation]{Proposition}
\newtheorem{corollary}[equation]{Corollary}
\newtheorem{question}[equation]{Question}
\newtheorem{conjecture}[equation]{Conjecture}
\newcommand{\BQ}{\mathbf{Q}}
\newcommand{\BZ}{\mathbf{Z}}
\newcommand{\BC}{\mathbf{C}}
\newcommand{\BF}{\mathbf{F}}
\newcommand{\BN}{\mathbf{N}}
\newcommand{\Aut}{\mathrm{Aut}}
\title{Ramification of Wild Automorphisms of Laurent Series Fields}
\newtheorem*{remark}{Remark}
\author{Kenz Kallal}
\address{Department of Mathematics, Harvard University, Cambridge, MA 02138}
\email{kenzkallal@college.harvard.edu}
\author{Hudson Kirkpatrick}
\address{Department of Mathematics, Princeton University, Princeton, NJ 08544-1000}
\email{Hbk@princeton.edu}
\date{March 22, 2019}
\begin{document}
	
	\maketitle
	\begin{abstract}
		Let $K$ be a complete discrete valuation field with residue class field $k$, where both are of positive characteristic $p$. Then the group of wild automorphisms of $K$ can be identified with the group under composition of formal power series over $k$ with no constant term and $X$-coefficient $1$. Under the hypothesis that $p > b^2$, we compute the first nontrivial coefficient of the $p$th iterate of a power series over $k$ of the form $f = X + \sum_{i \geq 1} a_iX^{b+i}$. As a result, we obtain a necessary and sufficient condition for an automorphism to be ``$b$-ramified,'' having lower ramification numbers of the form $i_n(f) = b(1 + \cdots + p^n)$. This is a vast generalization of Nordqvist's 2017 theorem on $2$-ramified power series, as well as the analogous result for minimally ramified power series which proved to be useful for arithmetic dynamics in a 2013 paper of Lindahl on linearization discs in $\BC_p$ and a 2015 result of Lindahl--Rivera-Letelier on optimal cycles over nonarchimedean fields of positive residue characteristic. The success of our computation is also promising progress towards a generalization of Lindahl--Nordqvist's 2018 theorem bounding the norm of periodic points of $2$-ramified power series.
	\end{abstract}
	
	\section{Introduction}
	Let $k$ be a field of characteristic $p > 0$, and consider the complete discrete valuation field $K = k((X))$ under the $X$-adic valuation. Any $k$-linear automorphism of $K$ is continuous, and therefore determined by a choice of the image of $X$ in the set of non-units of $k[[X]]$. The group of \emph{wild} automorphisms, the subgroup of $\Aut(K/k)$ consisting of all $\sigma$ such that $\nu_X(\sigma(X) - X) > 1$, can therefore be identified with the following group of formal power series over $k$. 
	\begin{definition}[$\mathcal{N}(k)$]
		We write $\mathcal{N}(k)$ to denote the group under composition of formal power series of the form \[f(X) = X + \sum_{i=2}^\infty a_iX^i\] with coefficients $a_i \in k$.
	\end{definition}
	\begin{remark}
		In the case $k = \BF_p$, the group $\mathcal{N}(k)$ is called the \emph{Nottingham group}. It is an example of a pro-$p$ group, and is thus of general interest in both group theory and number theory (see \cite[Ch. 1]{serre} and \cite[Ch. 10]{book}). It was first introduced by Jennings \cite{inspiration}, Johnson  \cite{johnson} and York \cite{york2, york1}. Of particular importance to group theory are some results of Leedham-Green--Weiss and Camina \cite[Theorems 3 and 5]{subgroups}; they state that every finite $p$-group can be embedded in $\mathcal{N}(\BF_p)$, and every countably-based pro-$p$ group can be embedded as a closed subgroup of $\mathcal{N}(\BF_p)$. Other important group-theoretic considerations about the Nottingham group are those about torsion: Klopsch \cite{klopsch} classified all elements of $\mathcal{N}(\BF_p)$ of order $p$ up to conjugacy, and his result was generalized by Jean \cite{finite1} and Lubin \cite{finite2} to order $p^n$ (see also \cite{finite3}).         
	\end{remark}
	In this paper we always use the notation $f^n$ to mean the $n$-th iterate of $f \in \mathcal{N}(k)$. By taking compositional inverses, this notation is well-defined for any $n \in \BZ$. If $G \subseteq \Aut(K/k)$ is the group of wild $k$-linear automorphisms, then the identification $\mathcal{N}(k) \xrightarrow{\sim} G$ is given by mapping $f(X)$ to the automorphism $g \mapsto g \circ f^{-1}$. In this way, composition and inverses of automorphisms correspond to the same operations for power series (where the inverses are compositional inverses). This gives a convenient explicit way to describe the higher ramification groups of $K/k$. For a given power series $f \in \mathcal{N}(k)$, we are interested in the ramification numbers with respect to the filtration of ramification groups of $\Aut(K/k)$ of the $\sigma \in \Aut(K/k)$ corresponding to $f$ and its iterates.
	From the identification between $\mathcal{N}(k)$ and the group of wild automorphisms of $K$, the ramification number of $f$ should be a constant factor from $\nu_X(f^{-1}(X) - X)$. Since the ramification groups are groups, the ramification number of $f$ is the same as that of $f^{-1}$. Consistent with this, we can define the ramification numbers explicitly in terms of power series without worrying about taking inverses:
	\begin{definition}[Ramification]\label{ramification}
		The \emph{ramification number} of $f \in \mathcal{N}(k)$ is \[i(f) := \nu_X(f(X) - X) - 1.\]
	\end{definition}
	Note that this normalizes the ramification number so that the least ramified $f \in \mathcal{N}(k)$ have $i(f) = 1$ instead of $2$.

	We will frequently mention the ``$n^\text{th}$ ramification number'' of $f$, that is, $i_n(f) := i\left(f^{p^n}\right)$. In fact, we are interested in power series with certain types of sequences of ramification numbers.
	\begin{definition}
		Let $f \in \mathcal{N}(k)$ and $b \geq 1$. We write that $f$ is $b$-\emph{ramified} if it has ramification numbers of the form
		\[i_n(f) = b(1 + p + \cdots + p^n)\]
		for all $n \geq 0$. 
	\end{definition}
	We will justify the choice of ramification of this form with Theorem~\ref{hammer}, a powerful result of Laubie and Sa\"ine which restricts it to that form under certain conditions. We will also see that $b$-ramification is the minimal sequence of ramification numbers given $i(f) = b$. The main question that this paper is concerned with is the following:
	\begin{question}\label{classify}
		Let $b \in \BN$ with $p \nmid b$. Given a sequence $\{a_i\}_{i \geq 1}$ of coefficients in $k$, is the formal power series \[f(X) = X + \sum_{i=1}^\infty a_iX^{i+b} \in \mathcal{N}(k)\] $b$-ramified?
	\end{question}
	We will call $a_1, \dots, a_n$ as defined in Question~\ref{classify} the ``first $n$ nontrivial coefficients of $f$.''  Nordqvist \cite{swedish} solved Question~\ref{classify} for $b = 2$ by giving a polynomial condition on finitely many of the $a_i$'s. This itself is a generalization of the characterization of minimally ramified power series (see \cite[Exemple 3.19]{letelier} or somewhat more generally \cite[Theorem E]{lindahl_l}). The computations that lead to these results have been relevant to many important theorems in nonarchimedean dynamical systems \cite{dynamic1, dynamic2, lindahl, lindahl_l, letelier, generic}.
	
	The main result of this paper generalizes the polynomial condition given by Nordvist to the following under certain hypotheses:
	\begin{definition}[$P_b$]\label{P}
		For $b \in \BN$, define the polynomial in $\BZ[x_1, \ldots, x_{b+1}]$
		\[P_b(x_1, \ldots, x_{b+1}) := (b+1)x_1^{b+1} + 2\hspace{-1em}\sum_{(e_1, \ldots, e_{b+1})}(-1)^{e_2 + \cdots + e_{b+1}}\frac{(e_2 + \cdots + e_{b+1})!}{e_2! \cdots e_{b+1}!}x_1^{e_1}\cdots x_{b+1}^{e_{b+1}}\]
		where the sum is taken over all tuples $(e_1, \ldots, e_{b+1}) \in \BZ^{b+1}_{\geq 0}$ such that $\sum_{i=1}^{b+1}e_i = b$ and $\sum_{i=1}^{b+1}ie_i = 2b$.
	\end{definition}
	This polynomial is what results from a computation of the first nontrivial coefficient of $f^p$ given those of $f$. 
	
	The main result of this paper is the following theorem\footnote{Since we posted on arXiv.org a draft of this paper containing this result, Jonas Nordqvist and Juan Rivera-Letelier informed us in a private communication that they have independently proved a version of it using the language of iterative residues.}, answering Question~\ref{classify} in the case where $b^2 < p$.
	\begin{theorem}\label{mainresult}
		Fix an odd prime $p$ and $b \in \BN$ such that $b^2 < p$, and let $k$ be an arbitrary field of characteristic $p$. Let $f(X) = X + \sum_{i=1}^\infty a_iX^{i+b} \in \mathcal{N}(k)$. Then $f$ is $b$-ramified if and only if $a_1 \neq 0$ and $P_b(a_1, \ldots, a_{b+1}) \neq 0$.
	\end{theorem}
	Note that the resulting
	\begin{align*}
	P_1 &= 2x_1^2 - 2x_2 \\
	P_2 &= 3x_1^3 + 2x_2^2 - 2x_1x_3
	\end{align*}
	agree with the previous results of Rivera-Letelier~\cite[Exemple 3.19]{letelier} and Nordqvist~\cite[Theorem 1]{swedish} answering Question~\ref{classify} in the cases $b = 1, 2$.

	This paper is organized as follows. In Section~\ref{fransson}, we describe the historical results on ramification numbers of power series which are necessary for our work. In Section~\ref{computation}, we give an explicit computation of the first nontrivial coefficient of the $p$-th iterate of a power series $f \in \mathcal{N}(k)$ with $i(f) = b$, which yields the desired necessary and sufficient condition for $f$ being $b$-ramified. In Section~\ref{dynamics}, we describe the implications this computation has for a generalization of Lindahl--Nordqvist's work in \cite{periodicpts} on the locations of periodic points in the open unit disc of a nonarchimedean field of characteristic $p$ under iteration of power series. 
	
	\section{Ramification of Power Series}\label{fransson}
	In this section and the next, $k$ is taken, as usual, to be an arbitrary field of characteristic $p > 0$. It is useful to have strong restrictions on the possible sequences of ramification numbers of elements of $\mathcal{N}(k)$. The most important historical result with regard to this is due to Sen \cite[Theorem~1]{sen}:
	\begin{theorem}[Sen, 1969]\label{senthm}
		Let $f \in \mathcal{N}(k)$, where $k$ is a field of characteristic $p > 0$. Then for all $n \in \BN$, \[i_n(f) \equiv i_{n-1}(f) \pmod{p^n}.\]
	\end{theorem}
	Sen originally stated Theorem~\ref{senthm} under the hypothesis that $k$ is also perfect, but it holds in the more general setting by an argument due to Lubin \cite[pg. 64]{lubin_sen}.
	The following corollary immediately follows from power series computations and Sen's Theorem.
	\begin{corollary}\label{ineq}
		Let $f \in \mathcal{N}(k)$. Then for all $n \geq 0$, \begin{equation}\label{sen_ineq} i_n(f) \geq 1 +  p + \dots + p^n.\end{equation}
	\end{corollary}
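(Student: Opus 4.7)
The plan is to prove Corollary \ref{ineq} by induction on $n$, combining Sen's theorem with the elementary fact that ramification numbers strictly increase under $p$-fold iteration. Throughout, I assume $f$ is not the identity, so that every $i_n(f)$ is a finite nonnegative integer (otherwise the inequality is vacuous).

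For the base case $n=0$, since $f \in \mathcal{N}(\mathbb{F}_p)$ satisfies $f(X) = X + a_k X^k + \cdots$ with $k \geq 2$, we have $\val_X(f(X)-X) \geq 2$, so $i_0(f) = i(f) \geq 1$, matching the right-hand side $1 = p^0$.

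For the inductive step, I need two ingredients. First, Sen's theorem directly gives
\[
i_n(f) \equiv i_{n-1}(f) \pmod{p^n}.
\]
Second, I would establish the strict inequality $i_n(f) > i_{n-1}(f)$ by a direct power-series calculation: setting $g = f^{\circ p^{n-1}}$ and writing $g(X) = X + cX^{k+1} + O(X^{k+2})$ with $c \neq 0$ and $k = i_{n-1}(f)$, induction on $m$ yields $g^{\circ m}(X) = X + mcX^{k+1} + O(X^{k+2})$. Taking $m = p$ kills the leading term in characteristic $p$, so $\val_X(g^{\circ p}(X)-X) \geq k+2$, giving $i_n(f) = i(g^{\circ p}) \geq k+1 > i_{n-1}(f)$. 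Combining these two facts, $i_n(f)$ and $i_{n-1}(f)$ are distinct integers congruent mod $p^n$ with the former larger, so $i_n(f) \geq i_{n-1}(f) + p^n$. Applying the inductive hypothesis yields
\[
i_n(f) \geq (1 + p + \cdots + p^{n-1}) + p^n = 1 + p + \cdots + p^n,
\]
completing the induction.

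The only genuine content here is Sen's theorem, which is quoted; the strict-monotonicity step is a short formal power-series calculation whose only subtlety is observing that the coefficient $mc$ vanishes precisely when $m = p$. There is no serious obstacle beyond making sure the inductive bookkeeping of the congruence and inequality is done correctly.
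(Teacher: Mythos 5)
Your argument is correct and is essentially the route the paper gestures at when it says the corollary ``immediately follows from power series computations and Sen's Theorem'': the strict increase $i_n(f) > i_{n-1}(f)$, obtained from the characteristic-$p$ cancellation of the leading coefficient under $p$-fold iteration, combined with Sen's congruence modulo $p^n$, forces $i_n(f) \geq i_{n-1}(f) + p^n$, and the induction closes. One small correction: a non-identity $f$ can still have finite $p$-power order, so it is not true that $f \neq X$ makes every $i_n(f)$ finite; in that case some $i_n(f) = +\infty$ and the inequality holds trivially under the paper's convention $\val_X(0) = +\infty$, so your monotonicity computation should simply be restricted to the case $f^{\circ p^{n-1}} \neq X$.
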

	
	Power series $f \in \mathcal{N}(k)$ for which equality holds in (\ref{sen_ineq}) are called \emph{minimally ramified}. Another useful result is a theorem of Keating \cite[Theorem~7]{keating}, which gives a general form for $i_n(f)$ given the first two ramification numbers under certain conditions. 
	\begin{theorem}[Keating, 1992]\label{mah_boi}
		Let $f \in \mathcal{N}(k)$. If $i_0(f) = 1$ and $i_1(f) = 1 + bp$ with $1\leq b\leq p -2$, then $i_n(f) = 1 + bp + \cdots + bp^n$.
	\end{theorem}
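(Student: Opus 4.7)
The plan is to proceed by induction on $n$, with the base cases $n = 0$ and $n = 1$ supplied directly by the hypotheses. For the inductive step, set $g := f^{\circ p^{n-1}}$, so that by the inductive hypothesis $r := i(g) = 1 + bp + \cdots + bp^{n-1}$; the goal becomes to prove that $i(g^{\circ p}) = r + bp^n$. Sen's theorem applied to the pair $(f^{\circ p^{n-1}}, f^{\circ p^n})$ already pins down $i(g^{\circ p}) \pmod{p^n}$ to equal $r$, so we may write $i(g^{\circ p}) = r + k p^n$ for some non-negative integer $k$, and the whole problem reduces to showing that $k = b$.

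To compute $k$, write $g(X) = X + c X^{r+1} + (\text{higher order})$ with $c \in \mathbb{F}_p^\times$ and expand $g^{\circ p}(X)$. The naive leading contribution to $g^{\circ p}(X) - X$ is $p c X^{r+1} = 0$ in characteristic $p$, so the valuation jumps to some larger value determined by the secondary terms produced by the chain rule over the $p$ iterations. The crucial observation is that the first several coefficients of $g = f^{\circ p^{n-1}}$ are not arbitrary: they are heavily constrained by the inductive hypothesis on the earlier ramification numbers, and part of the argument is to translate those constraints into precise information about these coefficients.

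The technical core is then a Fa\`a di Bruno-style expansion of $g^{\circ p}(X)$ in characteristic $p$, in which most terms collapse by Fermat-style cancellation but a distinguished coefficient of degree $r + 1 + bp^n$ survives and can be identified as a polynomial in the first several coefficients of $g$. The lower bound $i(g^{\circ p}) \geq r + bp^n$ would then follow by verifying that this polynomial is the first one that can fail to vanish, and the matching upper bound by arguing that it is actually nonzero. The hypothesis $1 \leq b \leq p - 2$ should enter at this stage, guaranteeing that certain binomial coefficients modulo $p$ do not degenerate, while the infinite-order assumption on $f$ rules out the trivial case $g^{\circ p}(X) = X$.

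I expect the main obstacle to lie in the combinatorial bookkeeping for the degree-$(r+1+bp^n)$ coefficient of $g^{\circ p}(X)$: writing down the Fa\`a di Bruno expansion in full generality is unwieldy, and isolating which of its many terms survive modulo $p$ --- while simultaneously showing that their sum is nonzero --- appears to require a delicate secondary induction on the position within the tower of $p$-iterates. This is morally in the spirit of Sen's theorem but substantially more intricate, which is presumably what makes Keating's original argument significantly harder than the proof of Sen's theorem.
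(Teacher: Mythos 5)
This statement is quoted in the paper as Keating's theorem and is only cited there (to \cite[Theorem~7]{keating}); the paper contains no proof of it, so your attempt can only be judged on its own terms. As it stands, it is a strategy outline rather than a proof, and the part you leave open is precisely the entire content of the theorem. The first reduction is fine: Sen's congruence $i_n(f)\equiv i_{n-1}(f)\pmod{p^n}$ lets you write $i(g^{\circ p})=r+kp^n$ with $g=f^{\circ p^{n-1}}$ and $r=i(g)$ given by the inductive hypothesis, and the problem becomes $k=b$. But you then say that a Fa\`a di Bruno expansion of $g^{\circ p}$ ``should'' produce a distinguished surviving coefficient in degree $r+1+bp^n$, without exhibiting it, without proving the lower bound $k\geq b$, and without any argument that the surviving coefficient is nonzero (upper bound $k\leq b$). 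The phrase ``the first several coefficients of $g$ are heavily constrained by the inductive hypothesis'' is where the argument actually breaks: the ramification numbers $i_0(f),\dots,i_{n-1}(f)$ do \emph{not} determine the individual coefficients of $f^{\circ p^{n-1}}$ beyond its leading deviation from $X$, so a direct expansion of $g^{\circ p}$ in terms of the coefficients of $g$ cannot, by itself, force the jump to be exactly $bp^n$ uniformly in $f$. Some additional structural input is needed --- in Keating's argument this comes from ramification theory of towers of extensions of local fields (and in the Laubie--Sa\"ine generalization, Theorem~\ref{hammer} of the paper, from a bound on $i_{n+1}$ in terms of $i_n$ and $i_1$ under the hypothesis $i_1(f)<(p^2-p+1)i(f)$); nothing in your sketch supplies a substitute for it.

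Two smaller points. The role you assign to the hypothesis $1\leq b\leq p-2$ (``certain binomial coefficients do not degenerate'') is a guess and is likely misplaced: its function is to put $i_1(f)=1+bp$ strictly below the critical threshold (compare the condition $i_1(f)<(p^2-p+1)i(f)$ in Theorem~\ref{hammer}), which is what rules out the anomalous ramification growth $i_n=p^{n-1}i_1$; an argument that never engages with such a threshold cannot be using the hypothesis correctly. Also, the claim that ``the naive leading contribution is $pcX^{r+1}=0$'' only explains why $i(g^{\circ p})>i(g)$ is possible, not why the jump is exactly $bp^n$; quantifying that jump is the hard analytic--combinatorial core, and acknowledging it as ``the main obstacle'' does not discharge it. To make this a proof you would need either to reproduce Keating's analysis of the tower $\mathbb{F}_p((X))$ under iterated $g$ (norms, different, ramification breaks) or to invoke and prove a statement of Laubie--Sa\"ine type; neither is sketched here.
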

	We will use a powerful generalization of Theorem~\ref{mah_boi} from \cite[Theorem~2]{laubie_saine}.
	\begin{theorem}[Laubie and Sa\"{i}ne, 1997]\label{hammer}
		Let $f \in \mathcal{N}(k)$. Then the following hold:
		\begin{enumerate}
			\item If $p | i(f)$, then $i_n(f) = p^ni(f)$ for all $n \geq 0$.
			\item Otherwise, if $i_1(f) < (p^2 - p + 1)i(f)$, then \[i_n(f) = i(f) + (1 + \dots + p^{n-1})(i_1(f) - i(f))\]
			for all $n \geq 1$. 
		\end{enumerate}
	\end{theorem}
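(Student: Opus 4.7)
The plan is to reduce the theorem to a single-iteration computation---given $h \in \mathcal{N}(\mathbb{F}_p)$ of infinite order, determine $i(h^{\circ p})$ from the coefficients of $h$---and then bootstrap via induction on $n$. Setting $g_n := f^{\circ p^n}$, part~(1) becomes the assertion that $i(h^{\circ p}) = p \cdot i(h)$ whenever $p \mid i(h)$, which is closed under iteration since $p \mid p \cdot i(h)$. Part~(2) is equivalent to the recurrence $i(g_{n+1}) - p \cdot i(g_n) = i_1(f) - p \cdot i(f)$ holding for every $n \geq 0$, so the substance of the theorem is to show that this defect is an invariant of the iteration.

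The one-step computation proceeds by explicit power series expansion. Setting $g(X) := h(X) - X$ with $m := i(h)$ so that $\val_X g = m+1$, and $G_k(X) := h^{\circ k}(X) - X$, I would iterate the recursion $G_{k+1}(X) = G_k(X) + g(X + G_k(X))$ and expand in $X$. The coefficient of $X^j$ in $G_k$ is a polynomial in $k$ whose terms carry binomial factors $\binom{k}{r}$, reflecting $r$-fold interactions among the monomials of $g$. Evaluating at $k = p$ annihilates every $\binom{p}{r}$ with $0 < r < p$, so only ``pure'' $p$-fold contributions survive. In part~(1), $p \mid m$ makes the dominant surviving term equal to $c^p X^{pm+1}$, where $c$ is the leading coefficient of $g$; this gives $i(h^{\circ p}) = pm$ and closes the induction. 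In part~(2), $p \nmid m$ causes that specific term to lie at a higher order, so the leading surviving term in $G_p$ involves the subleading coefficients of $h$; the hypothesis $i_1(f) < (p^2 - p + 1)i(f)$ is precisely the condition that this first-order correction dominates, rather than a deeper cancellation pushing the valuation further up.

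The main obstacle is the iterative propagation in part~(2): verifying that the defect $i(h^{\circ p}) - p \cdot i(h)$ is invariant under $h \mapsto h^{\circ p}$, so that iterating does not gradually shift the ramification off its affine-linear trajectory. Concretely, one needs to identify a specific coefficient of $h$ whose nonvanishing mod $p$ controls $i(h^{\circ p})$ and to show that this coefficient is preserved by $p$-fold composition under the low-defect hypothesis---a delicate combinatorial stability statement that is the technical heart of Laubie and Sa\"ine's argument. Once this invariance is established, the recursion $i_{n+1}(f) = p \cdot i_n(f) + (i_1(f) - p \cdot i(f))$ iterates cleanly, giving the announced closed form, and Sen's congruences $i_n(f) \equiv i_{n-1}(f) \pmod{p^n}$ serve as a consistency check that the output lies in the correct residue classes and Corollary~\ref{ineq} rules out degenerate collapses of the sequence.
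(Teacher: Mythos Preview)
The paper does not contain a proof of this theorem at all: it is quoted as an external result from Laubie and Sa\"{i}ne~\cite{laubie_saine} and used as a black box. There is therefore no ``paper's own proof'' to compare your proposal against.

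As for your proposal itself, it is an outline rather than a proof, and you are candid about this. The reduction to a one-step statement and the inductive bootstrap are the right scaffolding, and your identification of the key issue---that the defect $i(h^{\circ p}) - p\,i(h)$ must be shown to be invariant under $h \mapsto h^{\circ p}$ when the low-defect hypothesis holds---is accurate. But you explicitly defer that step (``Once this invariance is established\ldots''), and that step \emph{is} the theorem. The expansion you sketch for $G_k$ in terms of binomial coefficients $\binom{k}{r}$ is plausible heuristics, but making it rigorous requires tracking which surviving terms actually control $\val_X(G_p)$ and verifying that the relevant leading coefficient is nonzero after the $p$-th iterate; this is where the hypothesis $i_1(f) < (p^2 - p + 1)\,i(f)$ enters in a sharp way, and you have not supplied that computation. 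So the proposal correctly locates the difficulty but does not resolve it.
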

	It follows from Theorem~\ref{hammer} that if $f \in \mathcal{N}(k)$ with $i(f) = b$, $i_1(f) = b + bp$ and $p \nmid b$, then $f$ is $b$-ramified. Since $b$-ramified power series must have $i(f) = b$, to answer Question~\ref{classify} it suffices to determine the power series $f = X + \sum_{i =1}^{\infty}a_iX^{i+b}$ with $a_1 \neq 0$ with the property that $i(f^p) = b + bp$.
	
	The special case $b=2$ was solved using exactly this technique in \cite[Theorem~1]{swedish}:
	\begin{theorem}[Nordqvist, 2017] \sloppy
		A power series given by $f(X) = X + \allowbreak \sum_{i = 1}^{\infty} a_iX^{i+2} \in \mathcal{N}(k)$ is $2$-ramified if and only if $3a_3^3 + 2a_4^2 - 2a_2a_5 \neq 0$ and $a_1 \neq 0$.
	\end{theorem}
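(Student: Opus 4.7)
The plan is to combine the structural reduction of Theorem~\ref{hammer} with an explicit (but finite) expansion of the $p$-fold iterate $f^{\circ p}$ modulo a suitable power of $X$. Since $p>2$ is odd, $p\nmid 2$, so whenever $i(f)=2$ and $i_1(f)<2(p^2-p+1)$, case (2) of Theorem~\ref{hammer} forces
\[
i_n(f) \;=\; 2 + (1+p+\cdots+p^{n-1})\bigl(i_1(f)-2\bigr),
\]
and this equals $2(1+p+\cdots+p^n)$ precisely when $i_1(f)=2+2p$. The condition $i(f)=2$ is just $a_1\neq 0$. So the theorem reduces to the claim that, given $a_1\neq 0$, one has $\val_X\bigl(f^{\circ p}(X)-X\bigr)=2p+3$ if and only if the stated polynomial in the $a_i$'s is nonzero in $\mathbb{F}_p$.

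By Sen's theorem $i_1(f)\equiv 2\pmod p$, and by Corollary~\ref{ineq} $i_1(f)\geq 1+p$, so $i_1(f)\in\{2+p,\,2+2p,\,2+3p,\ldots\}$. Accordingly the work splits into two pieces: (a) showing that the coefficient of $X^{p+3}$ in $f^{\circ p}(X)-X$ vanishes identically as a polynomial in the $a_i$'s (ruling out $i_1(f)=2+p$), and (b) computing the coefficient of $X^{2p+3}$ explicitly and recognizing it as a nonzero scalar multiple of the claimed polynomial.

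To carry out the composition I would write $f(X)=X+X^3 h(X)$ with $h(X)=a_1+a_2 X+a_3 X^2+\cdots\in\mathbb{F}_p[[X]]^\times$ and observe that every iterate has the form $f^{\circ k}(X)=X+X^3 h_k(X)$ for some $h_k\in\mathbb{F}_p[[X]]$. The recursion $f^{\circ(k+1)}=f\circ f^{\circ k}$ becomes
\[
h_{k+1}(X) \;=\; h_k(X) + \bigl(1+X^2 h_k(X)\bigr)^{3}\, h\bigl(X+X^3 h_k(X)\bigr),
\]
and truncating modulo $X^{2p+1}$ makes $p$ iterations a finite, purely mechanical computation. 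The key simplification is that, in characteristic $p$, the low-order contributions to $h_k$ accumulate linearly in $k$, so at $k=p$ those terms vanish (since $\binom{p}{1}\equiv 0\pmod p$) and only quadratic-in-$k$ corrections survive to produce the polynomial identity.

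The main obstacle is the combinatorial bookkeeping in step (b): a naive $p$-fold expansion produces far too many terms to track by hand. The cleanest strategy is to prove a structural lemma asserting that $h_k$ is affine in $k$ modulo $X^{p+4}$—from which (a) follows immediately—and then to expand to one further order modulo $X^{2p+4}$, reading off the quadratic-in-$k$ correction at $k=p$. Identifying this correction with (a nonzero scalar multiple of) the polynomial in the theorem's statement is essentially the same calculation that produces $P_2 = 3x_1^3 + 2x_2^2 - 2x_1 x_3$ in Theorem~\ref{longasspoly}, up to reindexing of the $a_i$, and verifying that the surviving cross-terms indeed assemble into a single degree-3 polynomial in three variables (rather than proliferating) is the technical heart of the argument.
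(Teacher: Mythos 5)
First, a framing point: this statement is Fransson's theorem, which the paper cites rather than proves; the natural benchmark is the paper's own proof of the $b=3$ analogue in Section~\ref{result}. Your reduction step is correct and is exactly the framework used there: $a_1\neq 0$ gives $i(f)=2$, and Theorem~\ref{hammer} together with Sen's theorem (or, more simply, the elementary bound $\val_X(\Delta_m)\ge 2m+1$ for the finite differences) reduces $2$-ramification to whether $\val_X\bigl(f^{\circ p}(X)-X\bigr)=2p+3$, i.e.\ to the vanishing or non-vanishing of a single coefficient of $f^{\circ p}-X$.

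The gap is in the computational core, and your proposed structural lemma is false. Writing $f^{\circ k}(X)=X+X^3h_k(X)$, the coefficient of $X^2$ in $h_k$ (that of $X^5$ in $f^{\circ k}$) is $ka_3+3a_1^2\binom{k}{2}$, already quadratic in $k$; more generally the coefficient of $X^j$ in $h_k$ is a polynomial in $k$ whose degree grows with $j$, so by the time you reach the coefficient you actually need (that of $X^{2p}$ in $h_p$) the degree in $k$ is comparable to $p$, and the picture ``affine in $k$ modulo $X^{p+4}$, with only quadratic-in-$k$ corrections surviving because $\binom{p}{1}\equiv 0$'' does not describe what happens. (The vanishing of all coefficients below $X^{2p+3}$ at $k=p$ is true, but it follows at once from $\val_X(\Delta_p)\ge 2p+1$ together with Sen, which also makes your step (a) unnecessary; it is not a consequence of linearity in $k$.) What your plan leaves entirely unaddressed is the actual theorem: a closed-form evaluation, uniform in $p$, of the $X^{2p+3}$ coefficient of $f^{\circ p}-X$ and its identification with a fixed polynomial in the first three nontrivial coefficients. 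This is precisely what the paper's machinery exists for in the $b=3$ case: passing to $\Delta_m=\Delta_{m-1}\circ f-\Delta_{m-1}$ with $\Delta_p=f^{\circ p}-X$ (Corollary~\ref{d_p}) means only a bounded number of coefficients must be tracked, they satisfy a triangular linear recurrence in $m$ solvable in closed form (multifactorials and nested sums), and the evaluation at $m=p$ requires the $\mathbb{F}_p$ identities of Lemmas~\ref{shit1}--\ref{easysum} (Wilson's theorem, hockey stick) to produce a $p$-independent answer. Appealing instead to ``the same calculation that produces $P_2$'' in Theorem~\ref{longasspoly} does not close this: that theorem is conditional on Conjecture~\ref{general?}, and even unconditionally the program only certifies $P_2$ modulo finitely many primes, whereas the statement requires the identity for every odd prime. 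Finally, pin down the indexing: under the displayed normalization $f=X+\sum a_iX^{i+2}$ the criterion should involve only $a_1,a_2,a_3$ (per $P_2$ it reads $3a_1^3+2a_2^2-2a_1a_3$), so your ``up to reindexing'' hedge is exactly the identification a complete proof must make explicit.
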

	We will prove a similar result for the $b$-ramified power series in $\mathcal{N}(k)$ when $p > b^2$. 
	
	\section{Classification of \texorpdfstring{$b$}{\emph{b}}-Ramified Power Series}\label{computation}
	Let $b \in \BN$, and suppose $p > b^2$. 
	Let 
	\[f(X) = X + a_1X^{b+1} + a_2X^{b+2} + \dots \in \mathcal{N}(k),\] where $a_1 \neq 0$ so that $i_0(f) = b$. By Theorem~\ref{hammer}, $f$ is $b$-ramified if and only if $i_1(f) = b + bp$. Therefore, it suffices to compute the coefficients on the terms of degree at most $b + bp + 1$ in $f^{p}$ in terms of the $a_i$'s. To do this, we rely on a technique established in \cite[Lemma 3.6]{lindahl_l}. In particular, define the power series $\{\Delta_m\}_{m \geq 1}$ recursively by $\Delta_1 = f(X) - X$ and 
	\[\Delta_{m+1} = \Delta_m \circ f - \Delta_m.\]
	It is straightforward to check the following lemma by induction:
	\begin{lem}Suppose $m \in \BN$. Then, 
		\[\Delta_m(X) = \sum_{i = 0}^m (-1)^i\binom{m}{i}f^{m-i}(X).\]
	\end{lem}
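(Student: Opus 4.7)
The plan is to proceed by induction on $m$, directly unwinding the recursive definition $\Delta_{m+1} = \Delta_m \circ f - \Delta_m$ and applying Pascal's identity $\binom{m}{j} + \binom{m}{j-1} = \binom{m+1}{j}$ to collapse the two resulting sums.

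For the base case $m=1$, the claimed formula reads $f^{\circ 1}(X) - f^{\circ 0}(X) = f(X) - X$, which matches the definition $\Delta_1 = f - X$. For the induction step, I will assume
\[\Delta_m(X) = \sum_{i=0}^m (-1)^i \binom{m}{i} f^{\circ(m-i)}(X)\]
and substitute into the recursion to get
\[\Delta_{m+1} = \sum_{i=0}^m (-1)^i \binom{m}{i} f^{\circ(m-i+1)}(X) - \sum_{i=0}^m (-1)^i \binom{m}{i} f^{\circ(m-i)}(X).\]
Then I will reindex both sums by the exponent $j$ of the outer composition $f^{\circ(m+1-j)}$: in the first sum this corresponds to $i = j$ (valid for $0 \le j \le m$), contributing $(-1)^j\binom{m}{j}$, and in the second sum it corresponds to $i = j-1$ (valid for $1 \le j \le m+1$), contributing $-(-1)^{j-1}\binom{m}{j-1} = (-1)^j\binom{m}{j-1}$.

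Combining these contributions, the endpoint cases $j=0$ and $j=m+1$ each receive exactly one term, with coefficients $1 = \binom{m+1}{0}$ and $(-1)^{m+1} = (-1)^{m+1}\binom{m+1}{m+1}$ respectively. For $1 \le j \le m$, the combined coefficient is $(-1)^j\bigl(\binom{m}{j} + \binom{m}{j-1}\bigr) = (-1)^j\binom{m+1}{j}$ by Pascal's identity. This yields
\[\Delta_{m+1}(X) = \sum_{j=0}^{m+1} (-1)^j \binom{m+1}{j} f^{\circ(m+1-j)}(X),\]
completing the induction. There is no real obstacle here beyond careful index bookkeeping; the whole argument is a one-line application of Pascal combined with a reindexing.
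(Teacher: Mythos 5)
Your proof is correct and matches the paper's approach: the paper simply asserts the identity follows by induction on the recursion $\Delta_{m+1} = \Delta_m \circ f - \Delta_m$, which is exactly the argument you carry out, with the Pascal-identity bookkeeping made explicit.
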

	The following corollary is immediate by elementary properties of binomial coefficients modulo $p$:
	\begin{corollary}\label{d_p}
		$\Delta_p(X) = f^{p}(X) - X$.
	\end{corollary}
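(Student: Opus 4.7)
The plan is to apply the preceding lemma directly at $m=p$ and reduce the resulting sum modulo $p$. Writing out
\[\Delta_p(X) = \sum_{i=0}^{p} (-1)^i \binom{p}{i} f^{\circ(p-i)}(X),\]
the only step is to identify which binomial coefficients survive in characteristic $p$. A standard fact (Kummer's theorem, or simply the formula $\binom{p}{i} = \frac{p}{i}\binom{p-1}{i-1}$) gives $p \mid \binom{p}{i}$ for $1 \leq i \leq p-1$, so all of the interior terms vanish in $\mathbb{F}_p[[X]]$.

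Thus only the boundary terms $i=0$ and $i=p$ remain, contributing $f^{\circ p}(X)$ and $(-1)^p \binom{p}{p} f^{\circ 0}(X) = (-1)^p X$ respectively. Since the hypothesis of the paper is $p > 2$, in particular $p$ is odd, we have $(-1)^p = -1$, yielding $\Delta_p(X) = f^{\circ p}(X) - X$, as claimed.

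There is no real obstacle here: the result is a one-line consequence of the lemma together with the vanishing of the middle binomial coefficients modulo $p$. The only thing worth flagging is that the parity of $p$ is being used implicitly to turn $(-1)^p X$ into $-X$ rather than $+X$; if $p=2$ were allowed, the corollary would instead read $\Delta_p(X) = f^{\circ p}(X) + X$, which is the same statement in characteristic $2$ but looks different on the page. Since the paper has already fixed $p>2$ (indeed $p>3$ in the section where this is used), this subtlety does not cause any trouble.
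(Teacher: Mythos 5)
Your argument is exactly the one the paper intends when it says the corollary is ``immediate'' from the characteristic: specialize the lemma at $m=p$, kill the middle terms via $p \mid \binom{p}{i}$ for $1 \leq i \leq p-1$, and keep the two boundary terms with $(-1)^p = -1$. Correct, and essentially the same (one-line) proof as the paper's.
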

	Therefore, the desired quantity is
	\[i(f^p) = \nu_X(\Delta_p) - 1,\]
	so it suffices to compute the coefficients of $\Delta_p$. In particular, we wish to compute $\Delta_p$ modulo $X^{b + bp + 2}$, since this will provide us with all the coefficients of $f$ on the terms of degree at most $b + bp + 1$, as desired. 
	
	To do this, we prove a simple lemma that allows us to set up the necessary computational infrastructure.
	\begin{lem}\label{deltaval}
		$\nu_X(\Delta_m) \geq bm + 1$ for all $m \in \mathbf{N}$.
	\end{lem}
	\begin{proof}
		We proceed by induction. For the base case, we have already that 
		\[\Delta_1 = f(X) - X = a_1X^{b+1} + \cdots,\]
		so $\nu_X(\Delta_1) = b+1$ as desired. Now, assume the inductive hypothesis that $\nu_X(\Delta_m) \geq bm + 1$ for some $m \in \BN$. This means that we can write
		\[\Delta_m = A_1(m)X^{bm+1} + A_2(m)X^{bm+2} + \cdots\]
		for some $A_1(m), \ldots \in \BF_p$. From here it is clear (from the definition of $f$) that the term of least degree in \[\Delta_{m+1}(X) = (A_1(m)f(X)^{bm+1} + A_2(m)f(X)^{bm+2} + \cdots) - (A_1(m)X^{bm+1} + A_2(m)X^{bm+2} + \cdots)\]
		has degree at least $b(m+1) + 1$ [in particular the term of least degree that is not guaranteed to be cancelled by subtracting $\Delta_m$ is produced by the term of $A_1(m)f(X)^{bm+1}$ produced by choosing $bm$ copies of $X$ and one copy of $X^{b+1}$]. The desired result follows by induction. 
	\end{proof}
	\begin{corollary}
		$i_1(f) \geq bp$.
	\end{corollary}
	\begin{corollary}\label{sencor}
		$i_1(f) \geq bp + b$.
	\end{corollary}
	\begin{proof}
		By Sen's Theorem, $i_1(f) \equiv i_0(f) = b \pmod{p}$, which means that $i_1(f) = b + np$ for some integer $n$. If $n < b$, then by the assumption that $p > b$ (recall we actually assumed the stronger bound $p > b^2$), we have \[i_1(f) = b + np \leq b + (b-1)p = bp + (b-p) < bp,\] which contradicts the previous corollary.
	\end{proof}
	Corollary~\ref{sencor} means that under the hypothesis that $p > b$, $b$-ramification is the minimal possible ramification type subject to the condition $i_0(f) = b$.

	By Lemma~\ref{deltaval}, there are no terms of degree at most $bp$ in $\Delta_p$. Thus, we only need to consider the terms of degree $d$ for $bp + 1 \leq d \leq b + bp + 1$. In the language of the proof of the lemma, we need only to compute the coefficients $A_1(p), \ldots, A_{b+1}(p)$ in terms of the $a_i$'s. But Corollary~\ref{sencor} guarantees that $A_1(p), \ldots, A_b(p)$ are identically zero. It remains to compute $A_{b+1}(p)$ in terms of the $a_i$'s, which will in turn give us a criterion for $b$-ramification: by Theorem~\ref{hammer}, $f$ is $b$-ramified if and only if $A_{b+1}(p) \neq 0$. To do this, we compute using the recursive definition of the $A_i(m)$'s. In particular, we can compute modulo $X^{b + b(m+1) + 2}$ that since  $f(X)^{n}$ for $n > b + bm + 1$ can contribute no term that isn't cancelled by subtracting $\Delta_m(X)$,
	\begin{align*}\Delta_{m+1}(X) &\equiv A_1(m)f(X)^{bm+1} + A_2(m)f(X)^{bm+2} + \cdots + A_{b+1}f(X)^{b + bm + 1} - \Delta_m(X)\\
	&\equiv \bigg[A_1(m)a_1(bm+1))X^{b(m+1) + 1} + (A_1(m)a_2(bm+1) + A_2(m)a_1(bm+2)\bigg]X^{b(m+1) + 2}\\
	&+ \cdots + \bigg[A_1(m)a_b(bm+1) + \cdots + A_b(m)a_1(bm + b)\bigg]X^{b(m+1) + b}\\
	&+ \hspace{-0.15em}\left[\hspace{-0.25em}A_1(m)a_{1}^2\binom{bm+1}{2} \hspace{-0.25em} + \hspace{-0.25em} A_1(m)a_{b+1}(bm+1) + \cdots + A_{b+1}(m)a_1(bm + b + 1)\right]X^{b(m+1) + b + 1}.
	\end{align*}
	Recalling from the definition of $\Delta_1$ that $A_i(1) = a_i$, this is equivalent to defining the $A_i(m)$'s by the recurrence 
	\[
	\left[\begin{array}{c}
	A_1(m+1)\\
	A_2(m+1)\\
	\vdots\\
	A_b(m+1) \\
	A_{b+1}(m+1)
	\end{array}\right] \hspace{-2pt}=\hspace{-2pt} 
	\left[\begin{array}{cccc}
	a_1(bm+1) &  &  &  \\
	a_2(bm+1) & a_1(bm+2) &  &  \\
	\vdots & \vdots & \ddots &  \\
	a_b(bm + 1) & \cdots & a_1(bm+b) & \\
	a_1^2\binom{bm+1}{2} + a_{b+1}(bm+1) & a_b(bm+2) & \cdots & a_1(bm+b+1)
	\end{array}
	\right] \hspace{-4pt}
	\left[\begin{array}{c}
	A_1(m)\\
	A_2(m)\\
	\vdots\\
	A_b(m)\\
	A_{b+1}(m)
	\end{array}\right]
	\]
	with initial conditions 
	\[\left[\begin{array}{c}
	A_1(1)\\
	A_2(1)\\
	\vdots\\
	A_b(1) \\
	A_{b+1}(1)
	\end{array}\right] = \left[\begin{array}{c}
	a_1\\
	a_2\\
	\vdots\\
	a_b \\
	a_{b+1}
	\end{array}\right].\]
	In our computation of these coefficients, we use the ``$b$-tuple factorial'' notation, defined recursively by \[n!^{(b)} = (n-b)!^{(b)}n\]
	for $n \geq b$ and $n!^{(b)} = 1$ otherwise. 
	\begin{prop}\label{sumform}
		$A_i(m)$ is given by the following. For $\ell < b$,
		\begin{align*}
		A_\ell(m) &= a_ka_1^{m-1}(bm - b + \ell)!^{(b)} + \sum_{\ell > \alpha_1 > \cdots > \alpha_n > 0} \left(a_1^{m-n-1}a_{\ell-\alpha_1+1}a_{\alpha_n}\prod_{i=1}^{n-1}a_{\alpha_i - \alpha_{i-1} + 1}\right)\cdot\\
		&\sum_{i_1=1}^{m-1}\sum_{i_2=1}^{i_1-1}\cdots\sum_{i_n=1}^{i_{n-1}-1}\frac{(bm-b+\ell)!^{(b)}(bi_1 + \alpha_1)!^{(b)}\cdots(bi_n + \alpha_n)!^{(b)}}{(bi_1 + \ell)!^{(b)}(bi_2 + \alpha_1)!^{(b)}\cdots (bi_n + \alpha_{n-1})!^{(b)}}.
		\end{align*}
		For $\ell = b$,
		\begin{align*}
		A_b(m) &= a_ba_1^{m-1}\frac{(bm)!^{(b)}}{b} + \sum_{k > \alpha_1 > \cdots > \alpha_n > 0} \left(a_1^{m-n-1}a_{b-\alpha_1+1}a_{\alpha_n}\prod_{i=1}^{n-1}a_{\alpha_i - \alpha_{i-1} + 1}\right)\cdot\\
		&\sum_{i_1=1}^{m-1}\sum_{i_2=1}^{i_1-1}\cdots\sum_{i_n=1}^{i_{n-1}-1}\frac{(bm)!^{(b)}(bi_1 + \alpha_1)!^{(b)}\cdots(bi_n + \alpha_n)!^{(b)}}{(bi_1 + b)!^{(b)}(bi_2 + \alpha_1)!^{(b)}\cdots (bi_n + \alpha_{n-1})!^{(b)}}.
		\end{align*}
		And for $\ell = b+1$,
		\begin{align*}
		A_{b+1}(m) &= a_{b+1}a_1^{m-1}\frac{(bm +1)!^{(b)}}{bm+1} + a_1^{m+1}\frac{b}{2}\sum_{r=1}^{m-1}\frac{(bm+1)!^{(b)}(br+1)!^{(b)}}{(br+b+1)!^{(b)}}r \\&+ a_ba_2a_1^{m-2}\frac{1}{b}\sum_{r=1}^{m-1}\frac{(bm+1)!^{(b)}(br+b)!^{(b)}}{(br+b+1)!^{(b)}}\\
		&+ \sum_{\substack{b \geq \alpha_1 > \cdots > \alpha_n > 0 \\ \{\alpha_i\} \neq \{b\}}}\left(a_1^{m-n-1}a_{b-\alpha_1+2}a_{\alpha_n}\prod_{i=1}^{n-1}a_{\alpha_i - \alpha_{i-1} + 1}\right)\cdot\\
		&\sum_{i_1=1}^{m-1}\sum_{i_2=1}^{i_1-1}\cdots\sum_{i_n=1}^{i_{n-1}-1}\frac{(bm+1)!^{(b)}(bi_1 + \alpha_1)!^{(b)}\cdots(bi_n + \alpha_n)!^{(b)}}{(bi_1 + b+1)!^{(b)}(bi_2 + \alpha_1)!^{(b)}\cdots (bi_n + \alpha_{n-1})!^{(b)}}.
		\end{align*}
	\end{prop}
	\begin{proof}
		This is easily shown from the recursive descriptions of the $A_i(m)$ and the following observation from \cite[\S 1.2]{springer}, described in a similar form by \cite[Lemma 4]{swedish}: 
		\begin{lem}
			Let $f, g : \BN \to k$. Then the difference equation
			\[A(n+1) = f(n)A(n) + g(n)\]
			with initial condition $A(1) = a$ is uniquely satisfied by
			\[A(m) = \left[\prod_{j = 1}^{m-1} f(j)\right]a + \sum_{r=1}^{m-1}\left[\prod_{j = r+1}^{m-1}f(j)\right]g(r).\]
		\end{lem}
		Using the result for $A_1, \ldots, A_{k-1}$ for $g$ and applying the recursive definition above, we get the summation form for $A_k$ as desired. 
	\end{proof}
	Recall that all equalities are stated in $k$, and all the quantities in Proposition~\ref{sumform} other than the $a_i$'s are just integers which are reduced modulo $p$; they are in $\BF_p \subseteq k$, so we can compute them as elements of $\BF_p$. By taking an inverse of $b$ modulo $p$, we can simplify the expressions by making them include factorials instead of $b$-tuple factorials:
	\begin{lem}\label{factorial}
		Let $y \in \{0, 1, \ldots, b-1\}$ and $x \geq 0$. Then we have an equality of reductions modulo $p$
		\[(bx + y)!^{(b)} = b^x\frac{(x + yt)!}{(yt)!},\]
		where $t$ is the smallest lift of $b^{-1} \in \BF_p$ to $\BN$. 
	\end{lem}
	\begin{proof}
		In $\BF_p$ we can write
		\[(bx + y)!^{(b)} = (bx + y)(b(x-1) + y)\cdots(b + y).\]
		Multiplying by $t^x = (b^x)^{-1}$, we have
		\[(bx + y)!^{(b)} = b^x(x + yt)(x-1 + yt)\cdots (1 + yt) = b^x\frac{(x + yt)!}{(yt)!}\]
		as desired. 
	\end{proof}
	Applying Lemma~\ref{factorial} to the last part of Proposition~\ref{sumform}, we get
	\begin{prop}\label{coeff}
		The first nontrivial coefficient of $f^{p}$ is 
		\begin{align*} 
		A_{b+1}(p) &= a_{b+1}a_1^{p-1}b^p\frac{(p + t)!}{t!(b+1)} + a_1^{p+1}\frac{b}{2}\sum_{r=1}^{p-1}\frac{(p + t)!(r+t)!}{(r+1+t)!t!}b^{p-1}r + a_ba_2a_1^{p-2}\sum_{r=1}^{p-1}b^p\frac{(p+t)!(r+1)!}{(r+1+t)!}\\
		&+ b^{p-1}\hspace{-0.75cm}\sum_{b > \alpha_1 > \cdots > \alpha_n > 0} \left(a_1^{p-n-1}a_{b - \alpha_1 + 2}a_{\alpha_n}\prod_{i=1}^{n-1}a_{\alpha_i - \alpha_{i+1} +1}\right)\cdot\\&\qquad\qquad\qquad\qquad\,\sum_{i_1 = 1}^{p-1}\sum_{i_2 = 1}^{i_1 - 1} \cdots \sum_{i_n = 1}^{i_{n-1}-1}\frac{(p+t)!(i_1 + \alpha_1t)!\cdots(i_n + \alpha_nt)!}{(i_1 + 1 + t)!(i_2 + \alpha_1t)!\cdots(i_n + \alpha_{n-1}t)!(\alpha_nt)!}\\
		&+ b^{p-1}\hspace{-0.75cm}\sum_{\substack{b = \alpha_1 > \cdots > \alpha_n > 0\\ n > 1}} \left(a_1^{p-n-1}a_{2}a_{\alpha_n}\prod_{i=1}^{n-1}a_{\alpha_i - \alpha_{i+1} +1}\right)\cdot\\&\qquad\qquad\qquad\qquad\,\sum_{i_1 = 1}^{p-1}\sum_{i_2 = 1}^{i_1 - 1} \cdots \hspace{-2pt}\sum_{i_n = 1}^{i_{n-1}-1}\frac{(p+t)!(i_1 + \alpha_1t)!(i_2 + \alpha_2t)!\cdots(i_n + \alpha_nt)!}{(i_1 + 1 + t)!(i_2 + \alpha_1t)!(i_3 + \alpha_2t)!\cdots(i_n + \alpha_{n-1}t)!(\alpha_nt)!}.\\
		\end{align*}
	\end{prop}
	The goal of the rest of this section is to compute a more convenient form for these sums. We will frequently and without comment compute only residues mod $p$, passing to $\BQ_p$ and moving factors of $p$ in denominators around whenever necessary. 
	\begin{prop}\label{easyterms}
		\[b^p\frac{(p+t)!}{t!(b+1)!} = b^p\sum_{r=1}^{p-1}\frac{(p+t)!(r+1)!}{(r+1+t)!} = 0,\qquad \frac{b}{2}\sum_{r=1}^{p-1}\frac{(p+t)!(r+t)!}{(r+1+t)!t!}b^{p-1}r = \frac{b+1}{2}.\]
	\end{prop}
	\begin{proof}
		We start with the first expression. The fact that $b^p\frac{(p+t)!}{t!(b+1)!}$ vanishes in $\BF_p$ is obvious from the fact that $b+1 < p$. For the second expression, Wilson's Theorem gives us $(r+1)! = (-1)^r/(p-r-2)!$ for positive $r < p - 1$ and the $r = p-1, r < p-t-1$ terms are obviously zero so we have 
		\begin{align*}
		\sum_{r=1}^{p-1}\frac{(p+t)!(r+1)!}{(r+1+t)!} &= \sum_{r=p-t-1}^{p-2}(-1)^r\frac{(p+t)!}{(r+1+t)!(p-r-2)!}\\
		&= (p+t)\sum_{r=p-t-1}^{p-2}(-1)^r\binom{p+t-1}{r+t+1}\\
		&= t\sum_{i=0}^{t-1}(-1)^{i + p - t - 1}\binom{p + t - 1}{p+i}\\
		&= (-1)^tt\sum_{i=0}^{t-1}(-1)^{i}\binom{t - 1}{i}\\
		&= 0
		\end{align*}
		by the binomial theorem, as desired. The congruence $\binom{p + t - 1}{p+i} \equiv \binom{t-1}{i}$ modulo $p$ is evident either from writing them down as quotients and cancelling factors of $p$ or as a special case of Lucas's Theorem. 
		
		Finally, the only nonzero term in the third sum $\sum_{r=1}^{p-1}\frac{(p+t)!(r+t)!}{(r+1+t)!t!}b^{p-1}r$ occurs when $r + 1 + t = p$, i.e. $r = p - t - 1$. Therefore, 
		\begin{align*}
		\frac{b}{2}\sum_{r=1}^{p-1}\frac{(p+t)!(r+t)!}{(r+1+t)!t!}b^{p-1}r &= \frac{b}{2}\cdot\frac{(p+t)!(p-1)!}{(p)!t!}b^{p-1}(p-t-1)\\
		&= -\frac{b}{2}\cdot \frac{p!t!}{p!t!}b^{p-1}(p-t-1)\\
		&= \frac{b}{2}\cdot (t+1)\\
		&= \frac{b+1}{2}
		\end{align*}
		which proves the proposition.
	\end{proof}
	For the remaining two terms, we proceed by induction. From the form that we have reduced them to, it suffices to compute the sum
	\[\varphi_\beta(\alpha_1, \ldots, \alpha_n) := \sum_{i_1 = 1}^{p-1}\sum_{i_2 = 1}^{i_1 - 1} \cdots \sum_{i_n = 1}^{i_{n-1}-1}\frac{(p+t)!(i_1 + \alpha_1t)!\cdots(i_n + \alpha_nt + \beta)!}{(i_1 + 1 + t)!(i_2 + \alpha_1t)!\cdots(i_n + \alpha_{n-1}t)!(\alpha_nt)!} \in \BF_p\]
	where $\beta < b$ is a nonnegative integer, and $\alpha_1 > \ldots > \alpha_n$ are positive integers at most $b$. In fact, we will show that this expression has a much simpler form in terms of the inputs:
	\begin{prop}\label{induct}
		Let $\beta, \alpha_1, \ldots, \alpha_n$ be as above. Then 
		\[\varphi_\beta(\alpha_1, \ldots, \alpha_n) = (-1)^n\delta(\alpha_n, 1)\delta(\beta, 0)\]
		where $\delta(\cdot, \cdot)$ denotes the Kronecker delta. 
	\end{prop}
	\begin{proof}
		Let $n \geq 2$, $\beta < b$ and $b \geq \alpha_1 > \cdots > \alpha_n > 0$ be integers. We claim the result is true by induction on $n$. 
		
		Fix $i_{n-1} < p$. Then by cancelling factors of $p$ and analyzing the factors left over, we have that $\sum_{i_n=1}^{i_{n-1}-1} \frac{(i_n + \alpha_nt + \beta)!}{(i_n + \alpha_{n-1}t)!}$ reduces to
		\begin{align*}
		&\sum_{i_n = 1}^{i_{n-1}-1} \frac{(i_n + \alpha_nt + \beta + (\alpha_{n-1} - \alpha_n)p)!\lfloor\frac{i_n + \alpha_nt + \beta}{p}\rfloor!}{(i_n + \alpha_{n-1}t)!((\alpha_{n-1} - \alpha_n)p)!(\alpha_{n-1} - \alpha_n + 1) \cdots (\alpha_{n-1} - \alpha_n + \lfloor \frac{i_n + \alpha_nt + \beta}{p}\rfloor)}\\
		&= \sum_{i_n = 1}^{i_{n-1}-1}\binom{i_n + \alpha_nt + \beta + (\alpha_{n-1} - \alpha_n)p}{i_n + \alpha_{n-1}t}\cdot\\
		&\qquad\qquad\qquad\frac{\lfloor \frac{i_n + \alpha_nt +\beta}{p}\rfloor!((\alpha_{n-1} - \alpha_n)(p - t) + \beta)!}{((\alpha_{n-1} - \alpha_n)p)! (\alpha_{n-1} - \alpha_n + 1) \cdots (\alpha_{n-1} - \alpha_n + \lfloor \frac{i_n + \alpha_nt + \beta}{p}\rfloor)}.
		\end{align*}
		Of course, $\lfloor \frac{i_n + \alpha_nt + \beta}{p}\rfloor$ can only take on the values $\lfloor \frac{\alpha_nt + \beta}{p}\rfloor$ and $\lfloor \frac{\alpha_nt + \beta}{p}\rfloor + 1$ depending on whether $i_n$ is sufficiently large (since we know a priori that $i_n < p$). In particular, the former value is taken on if and only if $i_n < \lceil\alpha_nt + \beta\rceil_p - (\alpha_nt + \beta)$ (here $\lceil x\rceil_p$ denotes the least multiple of $p$ which is greater than $x$; note that this means $\lceil kp \rceil_p = (k+1)p$). If $i_{n-1} < \lceil \alpha_nt + \beta \rceil_p - (\alpha_nt  + \beta)$, then $i_n \leq i_{n-1} - 1 < \lceil\alpha_nt + \beta \rceil_p - (\alpha_nt + \beta) - 1$, so we have $\lfloor \frac{i_n + \alpha_nt + \beta}{p}\rfloor = \lfloor \frac{\alpha_nt + \beta}{p}\rfloor$ (in particular it doesn't depend on any of the indices in the sum). Therefore, in this case we have 
		\begin{align*}
		\sum_{i_n=1}^{i_{n-1}-1} \frac{(i_n + \alpha_nt + \beta)!}{(i_n + \alpha_{n-1}t)!} &= \frac{\lfloor\frac{\alpha_nt + \beta}{p}\rfloor!((\alpha_{n-1} - \alpha_n)(p-t) + \beta)!}{((\alpha_{n-1} - \alpha_n)p)!(\alpha_{n-1} - \alpha_n + 1)\cdots (\alpha_{n-1} - \alpha_n + \lfloor\frac{\alpha_nt + \beta}{p}\rfloor)}\cdot\\&\qquad\qquad\qquad\sum_{i_n=1}^{i_{n-1}-1}\binom{i_n + \alpha_nt + \beta + (\alpha_{n-1} - \alpha_n)p}{i_n + \alpha_{n-1}t}.
		\end{align*}
		By the hockey stick identity, the sum on the right hand side is
		\begin{align*}
		&\binom{i_{n-1} + \alpha_nt + \beta + (\alpha_{n-1} - \alpha_n)p}{i_{n-1} + \alpha_{n-1}t - 1} - \binom{1 + \alpha_nt + \beta + (\alpha_{n-1} - \alpha_n)p}{\alpha_{n-1}t}\\
		&= \frac{(i_{n-1} + \alpha_nt + \beta + (\alpha_{n-1} - \alpha_n)p)!}{(i_{n-1} + \alpha_{n-1}t - 1)!((\alpha_{n-1} - \alpha_n)(p - t) + \beta + 1)!} - \frac{(1 + \alpha_nt + 
			\beta + (\alpha_{n-1} - \alpha_n)p)!}{(\alpha_{n-1}t)!((\alpha_{n-1} - \alpha_n)(p-t) + \beta + 1)!}\\
		&= \frac{(i_{n-1} + \alpha_nt + \beta)!(\alpha_{n-1} - \alpha_n)p)!\cdot(\alpha_{n-1} - \alpha_n + 1)\cdots\left(\alpha_{n-1} - \alpha_n + \left\lfloor \frac{i_{n-1} + \alpha_nt + \beta}{p}\right\rfloor\right)}{(i_{n-1} + \alpha_nt - 1)!((\alpha_{n-1} - \alpha_n)(p - t) + \beta + 1)!\left\lfloor\frac{i_{n-1} + \alpha_nt + \beta}{p}\right\rfloor!}\\
		&\qquad\qquad-\frac{(1 + \alpha_nt + \beta)!(\alpha_{n-1} - \alpha_n)p)!(\alpha_{n-1} - \alpha_n + 1)\cdots\left(\alpha_{n-1} - \alpha_n + \left\lfloor\frac{1 + \alpha_nt + \beta}{p}\right\rfloor\right)}{(\alpha_{n-1}t)!((\alpha_{n-1} - \alpha_n)(p-t) + \beta + 1)!\left\lfloor\frac{1 + \alpha_nt + \beta}{p}\right\rfloor!}.\\
		\end{align*}
		Recall the assumption that $i_{n-1} < \lceil \alpha_nt +\beta \rceil_p - (\alpha_nt + \beta)$, from which it follows that $\lfloor\frac{i_{n-1} + \alpha_nt + \beta}{p}\rfloor = \lfloor\frac{\alpha_nt + \beta}{p}\rfloor$ and thus
		\begin{align}\label{case1}
		\sum_{i_n=1}^{i_{n-1}-1}\frac{(i_n + \alpha_nt + \beta)!}{(i_n + \alpha_{n-1}t)!} &= \frac{(i_{n-1} + \alpha_nt + \beta)!}{(i_{n-1} + \alpha_{n-1}t - 1)!(\beta + 1 - (\alpha_{n-1} - \alpha_n)t)} \\\nonumber &\qquad\qquad\qquad\qquad\qquad\qquad- \frac{(\alpha_nt + \beta + 1)!\ell(\alpha_{n-1}, \alpha_n)}{(\alpha_{n-1}t)!(\beta + 1 - (\alpha_{n-1} - \alpha_n)t)}
		\end{align}
		where $\ell(\alpha_{n-1}, \alpha_n)$ is defined to be $1$ if $\lfloor \frac{1 + \alpha_nt + \beta}{p}\rfloor = \lfloor \frac{\alpha_nt + \beta}{p}\rfloor$ (i.e. if $\alpha_nt + \beta \neq -1$ modulo $p$) and otherwise
		\[\ell(\alpha_{n-1}, \alpha_n) := \frac{\alpha_{n-1} - \alpha_n + \lfloor\frac{\alpha_nt + \beta}{p}\rfloor + 1}{\lfloor\frac{\alpha_nt + \beta}{p}\rfloor + 1}.\]
		
		In the remaining case, $i_{n-1} \geq \lceil \alpha_nt + \beta\rceil_p - (\alpha_nt + \beta)$, so $\sum_{i_n=1}^{i_{n-1}-1} \frac{(i_n + \alpha_nt + \beta)!}{(i_n + \alpha_{n-1}t)!}$ reduces to
		\begin{align}\label{case2}
		&\sum_{i_n=1}^{\lceil \alpha_nt + \beta\rceil_p - (\alpha_nt + \beta) - 1} \frac{(i_n + \alpha_nt + \beta)!}{(i_n + \alpha_{n-1}t)!} + \sum_{i_n=\lceil \alpha_nt + \beta\rceil_p - (\alpha_nt + \beta)}^{i_{n-1}-1} \frac{(i_n + \alpha_nt + \beta)!}{(i_n + \alpha_{n-1}t)!}\\
		&= \nonumber\frac{((\alpha_{n-1} - \alpha_n)(p - t) + \beta)!\lfloor\frac{\alpha_nt + \beta}{p}\rfloor!}{((\alpha_{n-1} - \alpha_n)p)!(\alpha_{n-1} - \alpha_n + 1) \cdots (\alpha_{n-1} - \alpha_n + \lfloor\frac{\alpha_nt + \beta}{p}\rfloor)}\cdot\\
		&\nonumber\qquad\qquad\qquad\qquad\Bigg(\sum_{i_n=1}^{\lceil \alpha_nt + \beta\rceil_p - (\alpha_nt +\beta)- 1}\binom{i_n + \alpha_nt +\beta + (\alpha_{n-1} - \alpha_n)t}{i_n + \alpha_{n-1}t}\\\nonumber&+ \frac{\lfloor\frac{\alpha_nt + \beta}{p}\rfloor + 1}{\alpha_{n-1} - \alpha_n + \lfloor\frac{\alpha_nt + \beta}{p}\rfloor + 1}\sum_{i_n=\lceil \alpha_nt + \beta\rceil_p - (\alpha_nt+\beta)}^{i_{n-1}-1}\binom{i_n + \alpha_nt + \beta+(\alpha_{n-1} - \alpha_n)t}{i_n + \alpha_{n-1}t}\Bigg).
		\end{align}
		
		Using the hockey stick identity, 
		\begin{align*}&\sum_{i_n=1}^{\lceil \alpha_nt + \beta \rceil_p - (\alpha_nt + \beta) - 1}\binom{i_n + \alpha_nt + \beta+ (\alpha_{n-1} - \alpha_n)p}{i_n + \alpha_{n-1}t} \\&\qquad\qquad\qquad\qquad\quad= \binom{\lceil\alpha_nt + \beta\rceil_p + (\alpha_{n-1} - \alpha_n)p}{\lceil\alpha_nt + \beta\rceil_p + (\alpha_{n-1} - \alpha_n)t - \beta - 1} - \binom{\alpha_nt + \beta + 1 + (\alpha_{n-1} - \alpha_n)p}{\alpha_{n-1}t}\end{align*}
		while
		\begin{align*}&\sum_{i_n=\lceil \alpha_nt + \beta\rceil_p - (\alpha_nt +\beta)}^{i_{n-1}-1}\binom{i_{n} + \alpha_nt + \beta + (\alpha_{n-1} - \alpha_n)p}{i_{n} + \alpha_{n-1}t} \\&\qquad\qquad\qquad\qquad\quad= \binom{i_{n-1} + \alpha_nt + \beta + (\alpha_{n-1} - \alpha_n)p}{i_{n-1} + \alpha_{n-1}t - 1} - \binom{\lceil\alpha_nt + \beta\rceil_p + (\alpha_{n-1} - \alpha_n)p}{\lceil\alpha_nt + \beta\rceil_p + (\alpha_{n-1} - \alpha_n)t - \beta - 1}.\end{align*}
		Thereofre, it suffices to compute these four binomial coefficients modulo $p$; note that the second one may be written as a specialization of the third to $i_{n-1} = 1$. As before, we can write that \[\binom{i_{n-1} + \alpha_nt + \beta + (\alpha_{n-1} - \alpha_n)p}{i_{n-1} + \alpha_{n-1}t-1}\] reduces to
		\[\frac{(i_{n-1} + \alpha_nt + \beta)!((\alpha_{n-1} - \alpha_n)p)!(\alpha_{n-1} - \alpha_n + 1)\cdots(\alpha_{n-1} - \alpha_n + \lfloor\frac{i_{n-1} + \alpha_nt + \beta}{p}\rfloor)}{(i_{n-1} + \alpha_{n-1}t - 1)!((\alpha_{n-1} - \alpha_n)(p - t) +\beta + 1)!\lfloor\frac{i_{n-1} + \alpha_nt +\beta}{p}\rfloor!}.\]
		Since $b^2 < p$, the smallest lift of the inverse of $b$ modulo $p$ must be greater than $b$. Hence, 
		\[\beta < b < t \leq (\alpha_{n-1} - \alpha_n)t\]
		so we can write $\beta + 1 \leq (\alpha_{n-1} - \alpha_n)t$. Using the same manipulations as before,
		\[\binom{\lceil\alpha_nt + \beta\rceil_p + (\alpha_{n-1} - \alpha_n)p}{\lceil\alpha_nt + \beta \rceil_p + (\alpha_{n-1} - \alpha_n)t -\beta- 1}\]
		therefore reduces to
		\begin{align*}\frac{((\alpha_{n-1} - \alpha_n)p)!(\lceil\alpha_nt + \beta\rceil_p)!(\alpha_{n-1} - \alpha_n + 1)\cdots(\alpha_{n-1} - \alpha_n + \lfloor\frac{\alpha_nt +\beta}{p}\rfloor + 1)\lfloor\frac{(\alpha_{n-1} - \alpha_n)t - \beta - 1}{p}\rfloor!}{\splitdfrac{(\lfloor\frac{\alpha_nt + \beta}{p}\rfloor + 1)!(\lceil\alpha_nt + \beta\rceil_p)!((\alpha_{n-1} - \alpha_n)t - \beta - 1)!(\lfloor\frac{\alpha_nt + \beta}{p}\rfloor+2)\cdots}{(\lfloor\frac{\alpha_nt + \beta}{p}\rfloor+1+\lfloor\frac{(\alpha_{n-1} - \alpha_n)t - \beta - 1}{p}\rfloor)((\alpha_{n-1} - \alpha_n)(p-t)+\beta + 1)!}}.\end{align*}
		Finally, 
		\[\binom{\alpha_nt +\beta + 1 + (\alpha_{n-1} - \alpha_n)p}{\alpha_{n-1}t}\]
		reduces to
		\[\frac{(\alpha_nt + \beta + 1)!((\alpha_{n-1} - \alpha_n)p)!(\alpha_{n-1} - \alpha_n + 1)\cdots(\alpha_{n-1} - \alpha_n + \lfloor\frac{1 + \alpha_nt}{p}\rfloor)}{(\alpha_{n-1}t)!((\alpha_{n-1} - \alpha_n)(p-t)+\beta + 1)!\lfloor\frac{1 + \alpha_nt}{p}\rfloor!}\]
		
		so we can write in this case by substituting our computations into (\ref{case2}),
		\begin{align*}&\sum_{i_n = 1}^{i_{n-1} - 1}\frac{(i_n + \alpha_nt + \beta)!}{(i_n + \alpha_{n-1}t)!} = \frac{((\alpha_{n-1} - \alpha_n)(p - t) + \beta)!\lfloor\frac{\alpha_nt + \beta}{p}\rfloor!}{((\alpha_{n-1} - \alpha_n)p)!(\alpha_{n-1} - \alpha_n + 1) \cdots (\alpha_{n-1} - \alpha_n + \lfloor\frac{\alpha_nt + \beta}{p}\rfloor)}\cdot\\
		&\Bigg(\binom{\lceil\alpha_nt + \beta\rceil_p + (\alpha_{n-1} - \alpha_n)p}{\lceil\alpha_nt + \beta\rceil_p + (\alpha_{n-1} - \alpha_n)t -\beta - 1} - \binom{\alpha_nt +\beta + 1 + (\alpha_{n-1} - \alpha_n)p}{\alpha_{n-1}t}\\&+ \frac{\lfloor\frac{\alpha_nt + \beta}{p}\rfloor + 1}{\alpha_{n-1} - \alpha_n + \lfloor\frac{\alpha_nt +\beta}{p}\rfloor + 1}\Bigg(\binom{i_{n-1} + \alpha_nt + \beta + (\alpha_{n-1} - \alpha_n)p}{i_{n-1} + \alpha_{n-1}t - 1}\\ &\qquad\qquad\qquad\qquad\qquad\qquad\qquad\qquad\qquad\qquad -\binom{\lceil\alpha_nt + \beta\rceil_p + (\alpha_{n-1} - \alpha_n)p)}{\lceil\alpha_nt+\beta\rceil_p + (\alpha_{n-1} - \alpha_n)t -\beta - 1}\Bigg)\Bigg)\\
		&= \frac{(\alpha_1 - \alpha_2)\lfloor\frac{(\alpha_{n-1} - \alpha_n)t - \beta - 1}{p}\rfloor!}{\splitdfrac{((\alpha_{n-1} - \alpha_n)(p - t) + \beta + 1)(\lfloor\frac{\alpha_nt+\beta}{p}\rfloor+1)((\alpha_{n-1} - \alpha_n)t - \beta - 1)!(\lfloor\frac{\alpha_nt + \beta}{p}\rfloor+2)}{\cdots(\lfloor\frac{\alpha_nt + \beta}{p}\rfloor+1+\lfloor\frac{(\alpha_{n-1} - \alpha_n)t - \beta - 1}{p}\rfloor)}}\\
		&-\frac{\ell(\alpha_{n-1}, \alpha_n)}{((\alpha_{n-1} - \alpha_n)(p - t)+\beta + 1)}\frac{( \alpha_nt + \beta + 1)!}{(\alpha_{n-1}t)!}+\frac{(i_{n-1} + \alpha_nt + \beta)!}{(i_{n-1} + \alpha_{n-1}t - 1)!((\alpha_{n-1} - \alpha_n)(p - t)+\beta + 1)}.
		\end{align*}
		Recall that $\varphi_\beta(\alpha_1, \ldots, \alpha_n)$ is defined by
		\[\varphi_\beta(\alpha_1, \ldots, \alpha_n) := \sum_{i_1 = 1}^{p-1}\cdots\sum_{i_n=1}^{i_{n-1}-1}\frac{(p+t)!(i_1 + \alpha_1t)!\cdots(i_n + \alpha_nt + \beta)!}{(i_1 + 1 + t)!(i_2 + \alpha_1t)!\cdots(i_n + \alpha_{n-1}t)!(\alpha_n)!},\]
		and that when $\beta = 0$ this is the last kind of coefficient we want to compute (see Proposition~\ref{coeff}).
		Since the last two terms in our expression for $\sum_{i_n = 1}^{i_{n-1}-1} \frac{(i_n + \alpha_nt + \beta)!}{(i_n + \alpha_{n-1}t)!}$ are the same modulo $p$ as entire expression in the case $i_{n-1} < \lceil\alpha_nt + \beta\rceil_p -(\alpha_nt +\beta)$ (see (\ref{case1})), we can conclude that 
		\begin{align*}
		\varphi_\beta(\alpha_1, \ldots, \alpha_n) &= c_1\sum_{i_1 = 1}^{p-1}\cdots\sum_{i_{n-1} = \lceil\alpha_nt + \beta\rceil_p - (\alpha_n+\beta)}^{i_{n-2} - 1}\frac{(p + t)!(i_1 + \alpha_1t)!\cdots(i_{n-1} + \alpha_{n-1}t)!}{\splitdfrac{(i_1 + 1 + t)!(i_2 + \alpha_1t)!\cdots(i_{n-1} + \alpha_{n-2}t)!(\alpha_{n}t)!\cdot}{((\alpha_{n-1} - \alpha_n)t-\beta-1)!}}\\
		&+c_2\sum_{i_1 = 1}^{p-1}\cdots\sum_{i_{n-1} = 1}^{i_{n-2} - 1}\frac{(p + t)!(i_1 + \alpha_1t)!\cdots(i_{n-1} + \alpha_{n-1}t)!(\alpha_nt+\beta + 1)!}{(i_1 + 1 + t)!(i_2 + \alpha_1t)!\cdots(i_{n-1} + \alpha_{n-2}t)!(\alpha_{n}t)!(\alpha_{n-1}t)!}\\
		&+\frac{1}{1 + \beta - (\alpha_{n-1} - \alpha_{n})t}\cdot\\&\qquad\qquad\sum_{i_1 = 1}^{p-1}\cdots\sum_{i_{n-1} = 1}^{i_{n-2} - 1}\frac{(p + t)!(i_1 + \alpha_1t)!\cdots(i_{n-1} + \alpha_{n-1}t)!(i_{n-1} + \alpha_nt + \beta)!}{(i_1 + 1 + t)!(i_2 + \alpha_1t)!\cdots(i_{n-1} + \alpha_{n-2}t)!(\alpha_{n}t)!(i_{n-1} + \alpha_{n-1}t - 1)!}.
		\end{align*}
		where $c_1, c_2 \in \BF_p^*$ (i.e. they have no factors of $p$ in numerator or denominator) and they depend only on the $\alpha_i$'s. 
		Note that the terms of the first sum are of the form
		\begin{align*}
		\frac{(p + t)!(i_1 + \alpha_1t)!\cdots(i_{n-1} + \alpha_{n-1}t)!}{(i_1 + 1 + t)!(i_2 + \alpha_1t)!\cdots(i_{n-1} + \alpha_{n-2}t)!(\alpha_{n}t)!((\alpha_{n-1} - \alpha_n)t-\beta - 1)!} = c\frac{(i_{n-1} + \alpha_{n-1}t)!}{(\alpha_nt)!((\alpha_{n-1} - \alpha_n)t - \beta - 1)!}
		\end{align*}
		where $c$ has a nonnegative number of factors of $p$. In fact, we have
		\[\frac{(i_{n-1} + \alpha_{n-1}t)!}{(\alpha_nt)!((\alpha_{n-1} - \alpha_n)t - \beta - 1)!} = \frac{(i_{n-1} + \alpha_nt + \beta + (\alpha_{n-1} - \alpha_n)t - \beta)!}{(\alpha_nt)!((\alpha_{n-1} - \alpha_n)t -\beta - 1)!}\]
		and from the fact that $i_{n-1} + \alpha_nt + \beta \geq \lceil\alpha_nt + \beta\rceil_p$ it is immediate that this is zero modulo $p$. Therefore, each term of the first sum is zero, and we can ignore it altogether. The second sum is equal to
		\[c_2\frac{(\alpha_nt + \beta + 1)!}{(\alpha_nt)!}\sum_{i_1 = 1}^{p-1}\cdots\sum_{i_{n-1} = 1}^{i_{n-2} - 1}\frac{(p + t)!(i_1 + \alpha_1t)!\cdots(i_{n-1} + \alpha_{n-1}t)!}{(i_1 + 1 + t)!(i_2 + \alpha_1t)!\cdots(i_{n-1} + \alpha_{n-2}t)!(\alpha_{n-1}t)!},\]
		which is
		\[c_2\frac{(\alpha_nt + \beta + 1)!}{(\alpha_nt)!}\varphi_0(\alpha_1, \ldots, \alpha_{n-1}) = 0\]
		by induction (since $\alpha_{n-1} > \alpha_n \geq 1$). Therefore, $\varphi_\beta(\alpha_1, \ldots, \alpha_n)$ is
		\begin{align*} &\,\,\,\,\,\,\,\,\frac{1}{1 - (\alpha_{n-1} - \alpha_{n})t}\sum_{i_1 = 1}^{p-1}\cdots\sum_{i_{n-1} = 1}^{i_{n-2} - 1}\frac{(p + t)!(i_1 + \alpha_1t)!\cdots(i_{n-1} + \alpha_{n-1}t)!(i_{n-1} + \alpha_nt + \beta)!}{(i_1 + 1 + t)!(i_2 + \alpha_1t)!\cdots(i_{n-1} + \alpha_{n-2}t)!(\alpha_{n}t)!(i_{n-1} + \alpha_{n-1}t - 1)!}\\
		&= \frac{1}{1 - (\alpha_{n-1} - \alpha_{n})t}\varphi_{\beta + 1}(\alpha_1, \ldots, \alpha_{n-2}, \alpha_n) - \varphi_{\beta}(\alpha_1, \ldots, \alpha_{n-2}, \alpha_n)\\
		&= -\varphi_{\beta}(\alpha_1, \ldots, \alpha_{n-2}, \alpha_n)
		\end{align*}
		by induction since $\beta + 1 > 0$. 
		
		The base case ($n = 1$) also lends itself well to our methods: if $\alpha > 1$ or if $\beta > 0$, then $i_1 + \alpha t + \beta \geq i_1 + 1 + t$ and we can write
		\begin{align*}\varphi_\beta(\alpha) &= \sum_{i_1 = 1}^{p-1}\frac{(p + t)!(i_1 + \alpha t + \beta)!}{(i_1 + 1 + t)!(\alpha t)!} = \frac{(p+t)!((\alpha  - 1)t+\beta-1)!}{(\alpha t)!}\sum_{i_1 = 1}^{p-1}\binom{i_1 + \alpha t + \beta}{i_1 + 1 + t}\\
		&= \frac{(p+t)!((\alpha  - 1)t+\beta-1)!}{(\alpha t)!}\left(\binom{p + \alpha t + \beta}{p + t} - \binom{1 + \alpha t + \beta}{1 + t}\right)\\
		&= 0.
		\end{align*}
		On the other hand, if $\alpha = 1$ and $\beta = 0$, we find that 
		\[\varphi_\beta(\alpha) = \sum_{i_1 = 1}^{p-1}\frac{(p + t)!(i_1 + t)!}{(i_1 + 1 + t)!(t)!},\]
		and the only nonzero term in this sum occurs when $i_1 + 1 + t = p$. Thus, 
		\[\varphi_\beta(\alpha) = \frac{(p+t)!(p-1)!}{p!t!} = -1.\]
		It follows by induction that $\varphi_\beta(\alpha_1, \ldots, \alpha_n) = 0$ if $\alpha > 1$ or $\beta > 0$, and otherwise it is $(-1)^n$.
	\end{proof}
	Applying Proposition~\ref{easyterms} and Proposition~\ref{induct} to the sums achieved in Proposition~\ref{coeff}, we obtain
	\begin{equation}\label{intermediate} A_{b+1}(p) = \frac{b+1}{2}a_1^{p+1} + \sum_{n=1}^b(-1)^na_1^{p-n}\sum_{b \geq \alpha_1 > \cdots > \alpha_n = 1}a_{b-\alpha_1+2}\prod_{i=1}^{n-1}a_{\alpha_i - \alpha_{i+1} + 1}.\end{equation}
	A careful analysis of the terms in this sum yields the main theorem of this paper.
	\begin{proof}[Proof of Theorem~\ref{mainresult}]
		Recall from Corollary~\ref{sencor} that $A_{b+1}(p)$ is the first nonzero coefficient of $\Delta_{p}$, and that $A_{b+1}(p)$ is the $X^{bp + b+1}$-coefficient of $\Delta_p$. By Corollary~\ref{d_p}, it follows that $i_1(f) = bp +b$ if and only if $A_{b+1}(p) \neq 0$. By Theorem~\ref{hammer}, $f$ is $b$-ramified if and only if $i_1(f) = bp + b$, so in fact $f$ is $b$-ramified if and only if $A_{b+1}(p) \neq 0$.
		
		Since $a_1 \neq 0$, we can multiply (\ref{intermediate}) by $2a_1^{b-p}$  to get that $f$ is $b$-ramified if and only if 
		\[(b+1)a_1^{b+1} + 2\sum_{n=1}^b(-1)^na_1^{b-n}\sum_{b \geq \alpha_1 > \cdots > \alpha_n = 1}a_{b-\alpha_1+2}\prod_{i=1}^{n-1}a_{\alpha_i - \alpha_{i+1} + 1} \neq 0\]
		
		Fixing $n$, the term $a_2^{e_2} \cdots a_{b+1}^{e_{b+1}}$ appears exactly $\binom{e_2 + \cdots + e_{b+1}}{e_2, \ldots, e_{b+1}}$ times. Since $n = e_2 + \cdots + e_{b+1}$ (in particular it is determined by the $e_i$'s), any monomial can only appear for at most one value of $n$. In fact, $a_2^{e_2}\cdots a_{b+1}^{e_{b+1}}$ shows up if and only if $e_2 + 2e_3 + \cdots + be_{b+1} = b$. In this case, \[e_1 = b - n = b - (e_2 + \cdots + e_{b+1})\] so this condition is equivalent to $a_1^{e_1}\cdots a_{b+1}^{e_{b+1}}$ appearing in the entire sum if and only if 
		\[e_1 + 2e_2 + \cdots + (b+1)e_{b+1} = 2b\]
		and \[e_1 + e_2 + \cdots + e_{b+1} = b.\] It follows that $f$ is $b$-ramified if and only if 
		\[(b+1)a_1^{b+1} + \sum_{\substack{e_1, \ldots, e_{b+1} \geq 0\\e_1 + \cdots + e_{b+1} = b\\e_1 + 2e_2 + \cdots + (b+1)e_{b+1} = 2b}}2(-1)^{e_2 + \cdots + e_{b+1}}\binom{e_2 + \cdots + e_{b+1}}{e_2, \ldots, e_{b+1}}a_1^{e_1}\cdots a_{b+1}^{e_{b+1}} \neq 0.\]
		We have finally recovered the polynomial (see Definition~\ref{P}) $P_b(a_1, \ldots, a_{b+1})$ as the criterion for $f$ being $b$-ramified. Adding in the assumption that $a_1 \neq 0$ from the beginning of this section (which is implied by $i(f) = b$), the desired result is established. 
	\end{proof}
	\section{Further Work and Applications to nonarchimedean Dynamics}\label{dynamics}
	The case where $k$ is perfect, so that $\mathcal{N}(k)$ is the group of wild automorphisms of the local field $k((X))$, is a natural application of Theorem~\ref{mainresult}. Our results are also of interest for their application to the case where $k$ is equipped with a nonarchimedean valuation, and the power series over $k$ are considered as acting on the open unit disc in $k$. In this section, $k$ is taken to be a nonarchimedean field of characteristic $p$ with absolute value $|\cdot|$.
	
	The ramification numbers of a power series, as we have defined them in Definition~\ref{ramification}, are only originally defined as being naturally an invariant of the corresponding automorphism of $k((X))$. However, in \cite{lindahl_l} and \cite{generic}, Lindahl and Rivera-Letelier found a deep connection between the ramification numbers of a power series over an arbitrary nonarchimedean field of characteristic $p$ and the location of its periodic points. In particular, if $k$ is a nonarchimedean field of characteristic $p$, then the power series \[f(z) \in z\left(1 + z\mathcal{O}_k[[z]]\right)\] converge on the open unit disc $\mathfrak{m}_k$, and all of their periodic points have minimal period of the form $p^n$ (see \cite[Lemma~2.1]{lindahl_l}). The relevant general lemma is from \cite[Lemma~2.4]{generic}:
	\begin{lem}\label{bigbrain}
		Suppose $f(z) \in \mathcal{N}(k)$ with $i_n(f) < \infty$ for all $n \geq 0$, and let $z_0 \in \mathfrak{m}_k$ be a periodic point under the action of $f$ of minimal period $p^n$. Then 
		\[|z_0| \geq \left|\frac{\delta_n(f)}{\delta_{n-1}(f)}\right|^{\frac{1}{p^n}},\]
		where $\delta_n(f)$ denotes the $X^{i_n(f)+1}$-coefficient of $f^{p^n}$.
	\end{lem}
	Using this fact and a computation of the relevant coefficient of $f^{p^n}$, Lindahl and Nordqvist (see \cite[Theorem A]{periodicpts}) proved a lower bound on the norm of periodic points under the action of $2$-ramified power series:
	\begin{theorem}[Lindahl and Nordqvist, 2018]
		Suppose $p \geq 5$ and let \[f(z) = z + \sum_{i = 1}^\infty a_{i}z^{i+2} \in \mathcal{O}_k[[z]].\] If $z_0 \in \mathfrak{m}_k$ is a periodic point of period $p^n$ under the action of $f$, then 
		\[|z_0| \geq \left|a_1^{p-3}\left(\frac{3}{2}a_1^3 + a_2^2 - a_1a_3\right)\right|^{\frac{1}{p}}.\]
	\end{theorem}
	The bound for $|z_0|$ comes from the computation of the first nontrivial coefficient of $f^{p^n}$, and it turns out to not depend on $n$ at all. Since the bound is a nonzero scalar multiple of the absolute value of $P_2$, it is only nontrivial so long as $f$ is $2$-ramified. Based on the the independence on $n$ in that case, we conjecture that the analogous bound is true for $b$-ramified power series.
	\begin{conjecture}\label{bigconj}
		Let $b \in \BN$ and suppose that $p > b^2$ is a prime. Moreover, let 
		\[f = z + \sum_{i=1}^\infty a_iz^{i+b}\]
		be a formal power series with coefficients in $\mathcal{O}_k$, where $a_1 \neq 0$. If $z_0 \in \mathfrak{m}_k$ is a periodic point of period $p^n$ under the action of $f$, then 
		\[|z_0| \geq \left|\frac{a_1^{p-b-1}}{2}P_b(a_1, \ldots, a_{b+1})\right|^{\frac{1}{p}}.\]
	\end{conjecture}
	
	Our computation of the relevant coefficient of $f^p$ in (\ref{intermediate}), combined with Lemma~\ref{bigbrain} gives a proof of Conjecture~\ref{bigconj} in the case $n=1$. For the general case, the appropriate coefficient of $f^{p^n}$ must be computed as well\footnote{
		Since we posted on arXiv.org a draft of this paper containing this conjecture, Jonas Nordqvist and Juan Rivera-Letelier informed us in a private communication that they have developed the required computation and therefore proved Conjecture~\ref{bigconj}.
	}. After conjugating by a translation, scaling, and taking iterates if necessary (see the remarks in \cite[p. 2]{generic}), this conjecture implies via the invariance of ramification under conjugation that any periodic point of a $b$-ramified power series over a nonarchimedean field of characteristic $p > b^2$ is isolated. That would add additional cases under which the main theorem of \cite{generic} holds, which represents progress towards \cite[Conjecture 1.2]{lindahl_l}. 
	\section*{Acknowledgments} The results in this paper originated from a project at PROMYS 2016. We are deeply
	grateful to Laurent Berger and Sandra Rozensztajn for proposing this problem, and for their mentoring. We also thank our PROMYS research lab counselor Alyosha Latyntsev for his support, as well as
	Krishanu Sankar, David Fried, Glenn Stevens, the PROMYS Foundation, and the Clay Mathematics
	Institute for making this research possible. In their most general form, many of the computations in this paper were completed at the University of Chicago REU during the summer of 2018. Therefore, the first author thanks his mentor Drew Moore for the encouragement he gave, as well as J.P. May for making the REU possible. Finally, we would like to thank Jonas Nordqvist and Juan Rivera-Letelier for their helpful correspondence with us regarding the hypotheses and implications of Conjecture~\ref{bigconj}. 
	
	\bibliographystyle{plainurl}
	\bibliography{main.bbl}
\end{document}